\title[Real hypersurfaces with many prescribed components]{Existence of real algebraic hypersurfaces with many prescribed components}
\author{Michele Ancona}
\thanks{Institut de Recherche Math\'ematique Avanc\'ee, Universit\'e de Strasbourg.\\
 \emph{E-mail address}: \url{michele.ancona@math.unistra.fr}. }
\date{}
\theoremstyle{plain}
\newtheorem{thm}{Theorem}[section]
\newtheorem{lemma}[thm]{Lemma}
\newtheorem{prop}[thm]{Proposition}
\newtheorem{oss}[thm]{Remark}
\theoremstyle{definition}
\newtheorem{defn}[thm]{Definition}
\renewcommand{\P}{\mathbf{P}}
\newcommand{\E}{\mathbf{E}}
\newcommand{\Z}{\mathbf{Z}}
\newcommand{\R}{\mathbf{R}}
\newcommand{\C}{\mathbf{C}}
\newcommand{\Vol}{\textrm{Vol}}
\begin{document}
\maketitle
\begin{abstract} 
Given a real algebraic variety $X$ of dimension $n$, a very ample divisor $D$ on $X$ and a smooth closed hypersurface $\Sigma$ of $\R^n$, we construct real algebraic hypersurfaces in the linear system $\abs{mD}$ whose real locus contains many connected components diffeomorphic to $\Sigma$. 
As a consequence, we show the existence of real algebraic hypersurfaces in the linear system $\abs{mD}$ whose Betti numbers grow by the maximal order, as $m$ goes to infinity. As another application, we recover a result by D. Gayet on the existence of many disjoint lagrangians with prescribed topology in any smooth complex hypersurface of $\C\P^n$.
The results in the paper are  proved more generally for complete intersections.
 The proof of our main result uses probabilistic tools.
\end{abstract}
\section{Introduction}
This paper deals with the topology of real algebraic varieties. A real algebraic variety $X$ is an algebraic variety defined over $\R$.  We denote its real and complex locus respectively by $X(\R)$  and $X(\C)$ and its dimension by $n$. We assume that $X$ is smooth and projective and that its real locus is non empty.

\subsection{Topology of real algebraic hypersurfaces} A major restriction for the topology of real algebraic varieties is given by Smith-Thom inequality \cite{thomreelle}, which says that the sum of the $\Z/2$-Betti numbers of the real locus of a real algebraic variety is smaller or equal than the sum of the $\Z/2$-Betti numbers of its complex locus, that is 
\[
b\left(X(\R)\right):=\displaystyle\sum_{i=0}^{n}b_i\left(X(\R),\Z/2\right)\leq \sum_{i=0}^{2n}b_i\left(X(\R),\Z/2\right)=:b\left(X(\C)\right).
\]
In the case of equality, the variety is called a \emph{maximal variety}. For real algebraic curves, Smith-Thom inequality is known as Harnack-Klein inequality \cite{harnack,klein} and can be written $b_0\big(C(\R)\big)\leq g+1$, where $g$ is the  genus of $C(\C)$.

A fundamental question in real algebraic geometry asks whether a given real algebraic variety contains  maximal hypersurfaces in some given linear system. For example, for any $m\in\mathbf{N}$, there exists a degree $m$ maximal real algebraic curves in $\P^2$,  see \cite{harnack}.
However, in general, the answer to this question is negative. For example, given a non-maximal real algebraic curve $C$, the surface $C\times C$ does not contain any maximal curves.
Nevertheless, such surface contains \emph{asymptotically maximal hypersurfaces}. To give the definition of asymptotically maximal hypersurfaces, let us fix a very ample divisor $D$ of $X$ and consider the linear system $\abs{mD}, m>0$, consisting of real divisors linearly equivalent to $mD$. By Bertini theorem, a generic element $Z$ of $\abs{mD}$ is smooth and, by Ehresmann theorem, the topology of the complex locus $Z(\C)$ of $Z$ only depends on $D$ and on $m$. One can then compute the asymptotics $b\left(Z(\C)\right)=D^{n}m^n+O(m^{n-1})$, as $m\rightarrow\infty$, where $D^n$ denotes the top self-intersection number of $D$ (which is a positive integer) and $n$ is the dimension of $X$. By Smith-Thom inequality, we then have $$b\left(Z(\R)\right)\leq D^{n}m^n+O(m^{n-1})$$ and a sequence $Z_m\in\abs{mD}$ is called asymptotically maximal if $$\displaystyle\lim_{m\rightarrow\infty}\frac{b\left(Z_m(\R)\right)}{D^{n}m^n}\rightarrow 1.$$

Asymptotically maximal hypersurfaces exists in projective spaces \cite{itenbergviro}, in toric varieties \cite{bertrand} and in surfaces \cite[Theorem 5]{gwexp}. However, one does not know if any real algebraic variety contains asymptotically maximal hypersurfaces and it is not clear (at least for the author) if one should expect this property to be true for any $X$.

One of the consequence of the main result of this paper is  that \emph{any} real algebraic variety always contains real algebraic hypersurfaces whose Betti numbers grow as the maximal possible order. More precisely we obtain the following result.
\begin{thm}\label{thm existence} Let $X$ be a real algebraic variety of dimension $n$ and $D$ be a real very ample divisor of $X$.
 Then there exists $c>0$ and $m_0\in\mathbf{N}$ such that, for any $m\geq m_0$,  there exists $Z\in\abs{mD}$ with $b_i\left(Z(\R)\right)\geq cm^{n}$ for any $i\in\{0,\dots,n-1\}$.
\end{thm}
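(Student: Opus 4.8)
The plan is to deduce Theorem~\ref{thm existence} from the stronger statement advertised in the abstract, specialized to codimension one and to a single, carefully chosen model hypersurface. Concretely, I would first establish (this is the part that uses the probabilistic machinery, sketched below): for every connected smooth closed hypersurface $\Sigma\subset\R^n$ there are constants $c_\Sigma>0$ and $m_0\in\mathbf N$ so that for all $m\ge m_0$ some $Z\in\abs{mD}$ has at least $c_\Sigma m^n$ pairwise disjoint connected components of $Z(\R)$, each diffeomorphic to $\Sigma$. Granting this, it remains to pick $\Sigma$ with $b_i(\Sigma;\Z/2)\ge1$ for all $i\in\{0,\dots,n-1\}$. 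Take $\Sigma=(S^1)^{n-1}$: it embeds in $\R^n$ as a closed hypersurface, by induction on $k$ on the statement ``$(S^1)^k\subset\R^{k+1}$ as a closed hypersurface'' (given such an embedding, $(S^1)^k\subset\R^{k+1}\subset\R^{k+2}$ has trivial rank-$2$ normal bundle, so the boundary of a tubular neighbourhood is $(S^1)^k\times S^1=(S^1)^{k+1}\subset\R^{k+2}$); and by K\"unneth $b_i\big((S^1)^{n-1};\Z/2\big)=\binom{n-1}{i}\ge1$ for $0\le i\le n-1$. Since mod-$2$ Betti numbers are additive over disjoint unions and each of the $\ge c_\Sigma m^n$ components diffeomorphic to $\Sigma$ contributes $\binom{n-1}{i}$ to $b_i(Z(\R))$, we get $b_i(Z(\R))\ge c_\Sigma m^n$ for every $i\in\{0,\dots,n-1\}$, which is Theorem~\ref{thm existence} with $c=c_\Sigma$. (For $n=1$ one reads $(S^1)^0$ as a point.)

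For the prescribed-components statement I would run the random real-algebraic-geometry scheme of Gayet--Welschinger and Nazarov--Sodin. Put on $L=\mathcal O_X(D)$ a real Hermitian metric of positive curvature; with the induced volume form on $X(\R)$ this yields an $L^2$ Gaussian probability measure on the real vector space $\R H^0(X,L^m)$, of dimension $\sim\frac{D^n}{n!}m^n$. For a random section $s$ write $Z_s(\R)=\{s=0\}$. By the near-diagonal scaling asymptotics of the Bergman kernel of $L^m$, in a chart centred at any $p\in X(\R)$ with coordinates dilated by $\sqrt m$ the field $s$ converges in law, uniformly in $p$, to a universal centred Gaussian field $f$ on $\R^n$, the convergence being in $C^1$ on each fixed ball $\overline{B(0,R)}$. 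The Cameron--Martin space of $f$ contains all polynomials, hence is dense in $C^1(\overline{B(0,R)})$ and $f$ has full support; so, fixing once and for all a smooth $g$ on $\R^n$ whose zero set inside $B(0,R)$ is a regular copy of $\Sigma$ sitting in $B(0,R/2)$ and which does not vanish on a collar of $\partial B(0,R)$, the event ``$C^1$-close to $g$'' has positive $f$-probability; by Thom's isotopy lemma, on this event the zero set has a connected component diffeomorphic to $\Sigma$ inside $B(0,R)$, and the nonvanishing collar makes it a genuine connected component of the whole zero set. Passing back through the scaling limit (portmanteau), one gets $\delta>0$, independent of $p$ and of $m\ge m_0$, such that with probability $\ge\delta$ the set $Z_s(\R)$ has a connected component diffeomorphic to $\Sigma$ contained in $B(p,Rm^{-1/2})$.

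To conclude I would take a maximal family of pairwise disjoint balls $B(p_j,Rm^{-1/2})$, $j=1,\dots,N$; compactness and finite volume of $X(\R)$ give $N\ge c_0m^n$ for some $c_0>0$. Let $A_j$ be the event that $Z_s(\R)$ has a component diffeomorphic to $\Sigma$ inside $B(p_j,Rm^{-1/2})$; then $\mathbf P(A_j)\ge\delta$, and components realized in distinct (disjoint) balls are automatically distinct components of $Z_s(\R)$. Hence, with $Y=\#\{j:A_j\text{ holds}\}$, linearity of expectation gives $\E[Y]=\sum_j\mathbf P(A_j)\ge\delta N\ge\delta c_0m^n$, and since $\max Y\ge\E[Y]$ some section $s$ --- equivalently some $Z=Z_s\in\abs{mD}$ --- has at least $\delta c_0 m^n$ disjoint components diffeomorphic to $\Sigma$. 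No independence of the $A_j$ is needed here; decay of correlations of the Bergman kernel would only be used to upgrade this to a high-probability statement.

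The hard part will be the local step of the middle paragraph: proving that the rescaled $s$ approximates the universal field $f$ in the $C^1$ topology on a ball of fixed radius \emph{uniformly} in the base point and in $m\ge m_0$, and choosing the barrier $g$ and its nonvanishing collar so that the isotopy argument really manufactures an honest connected component of $Z_s(\R)$ rather than of a truncation. For the complete-intersection version stated in the paper one runs the same argument with an $r$-tuple of independent random sections of $L_1^m,\dots,L_r^m$ and their common real zero locus, the local model and the barrier now living in codimension $r$.
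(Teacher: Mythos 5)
Your reduction of Theorem \ref{thm existence} to the prescribed-components statement (take $\Sigma=(S^1)^{n-1}\subset\R^n$, use K\"unneth and additivity of mod-$2$ Betti numbers over the $\geq c_\Sigma m^n$ components) is exactly the paper's reduction and is fine. The gap is in the probabilistic middle step, and it is quantitative, not cosmetic: you work with balls of radius $Rm^{-1/2}$, but a maximal disjoint family of such balls in the fixed compact $n$-manifold $X(\R)$ has cardinality of order $m^{n/2}$, not $m^n$. So the expectation argument as you set it up yields only $\sim m^{n/2}$ components diffeomorphic to $\Sigma$ --- the Gayet--Welschinger order --- which is not enough for the bound $b_i(Z(\R))\geq cm^n$. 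The claim ``compactness and finite volume of $X(\R)$ give $N\geq c_0m^n$'' is simply false at scale $m^{-1/2}$.

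The deeper issue is that your setup mixes two incompatible models. If you use the complex Fubini--Study-type measure (Bergman kernel, natural wavelength $m^{-1/2}$, Bargmann--Fock scaling limit), then the local analysis is correct but the packing count caps you at $m^{n/2}$; in fact, for that measure, topology of order $m^n$ is exponentially rare, so no expectation argument can reach it. To get $m^n$ components you must work at scale $m^{-1}$, i.e.\ pack $\sim m^n$ balls of radius $R/m$, and for the barrier argument to work at that scale you need a Gaussian ensemble whose covariance has a nondegenerate universal limit after rescaling by $m$ --- this forces the \emph{real} $L^2$ inner product. That is what the paper does: it takes real polynomials of degree $m$ on $\P^N$ with the $L^2$ product over $\P^N(\R)$ (equivalently $S^N$), restricts them to $X$, and uses H\"ormander's spectral asymptotics to identify the scaling limit at scale $m^{-1}$ as the band-limited field with covariance $K(u,v)=\int_{B_{\R^N}(0,1)}e^{i\langle u-v,\xi\rangle}d\xi$; full support of that field is then proved by approximating monomials by elements of $\mathrm{Span}\{K_v\}$ (your assertion that the Cameron--Martin space ``contains all polynomials'' is not literally true for this band-limited field and needs the approximation argument). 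Note also that your hybrid choice --- sections of $\mathcal{O}_X(mD)$ with the $L^2$ norm over $X(\R)$ but ``Bergman kernel'' asymptotics at scale $\sqrt m$ --- is internally inconsistent: the near-diagonal asymptotics of that real kernel are not the Bergman asymptotics, and the paper deliberately avoids having to prove such asymptotics on $X(\R)$ by working with ambient polynomials on $S^N$ and restricting. With the measure, the scale, and the support lemma corrected, the rest of your scheme (barrier function, Thom isotopy, linearity of expectation over disjoint balls, $\sup\geq\E$) coincides with the paper's proof.
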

\subsection{Existence of hypersurfaces with prescribed components} We actually prove a more precise  result than Theorem \ref{thm existence}. Indeed, for any  smooth closed (not necessarly connected) hypersurface $\Sigma$ of $\R^n$, we can produce real algebraic hypersurfaces $Z_m\in\abs{mD}$ such that $Z_m(\R)$ contains at least $cm^n$ connected component diffeomorphic to $\Sigma$. This is the content of Theorem \ref{thm existence of components}, which is the main result of our paper. In order to state it, we need the following definition.
\begin{defn}\label{defN} Let $\Sigma$ be a closed smooth manifold, not necessarly connected.
For any smooth manifold $S$, we denote by $\mathcal{N}_{\Sigma}(S)$ the number of  connected components of $S$ that are diffeomorphic to $\Sigma$.
\end{defn}
\begin{thm}\label{thm existence of components} Let $X$ be a real algebraic variety of dimension $n$ and $D$ be a real very ample divisor of $X$. For any  smooth closed  hypersurface $\Sigma$ of $\R^n$,
  there exists $c>0$ and  $m_0\in\mathbf{N}$ such that, for any $m\geq m_0$,  there exists $Z\in\abs{mD}$ with $\mathcal{N}_{\Sigma}\left(Z(\R)\right)\geq cm^{n}$.
\end{thm}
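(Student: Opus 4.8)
The plan is to realise $\Sigma$ --- and, more ambitiously, a near-maximal family of copies of it spread over two scales --- as part of the real vanishing locus of a section of $\mathcal{O}(mD)$, and to exhibit such a section through a Gaussian random model. For the first step I would fix a real point $x_0\in X(\R)$ and a chart identifying a neighbourhood of $x_0$ in $X(\C)$ with a ball in $\C^n$ and the corresponding neighbourhood in $X(\R)$ with a ball $B(0,r_0)\subset\R^n$. Since $\Sigma$ is a closed hypersurface of $\R^n$, combinatorial patchworking in the style of Viro and Itenberg--Viro \cite{itenbergviro} produces, for every $d\in\mathbf{N}$, a real polynomial $Q_d$ of degree $O(d)$ whose real zero locus inside $B(0,r_0)$ contains at least $c_\Sigma\, d^{\,n}$ connected components diffeomorphic to $\Sigma$: one patchworks a fixed polynomial realising $\Sigma$ in a small box into each cell of a fine unimodular subdivision. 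Transversality of the construction makes this robust: any real-analytic function $C^1$-close to $Q_d$ on $\overline{B(0,r_0)}$ realises the same components.

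\emph{Reduction to a simultaneous local realisation.} Fix a positively curved real Hermitian metric on $\mathcal{O}(D)$. For $m$ large the Bergman kernel of $\mathcal{O}(mD)$ has Gaussian off-diagonal decay at the scale $1/\sqrt m$: rescaling around a real point by $\sqrt m$, a normalised section converges to a universal Gaussian field, and the rescaled restrictions to two balls separated by $\gg 1/\sqrt m$ are asymptotically independent. I would pick a maximal $\Lambda/\sqrt m$-separated set $x_1,\dots,x_N$ in $X(\R)$ for a large fixed $\Lambda$, so that $N\asymp m^{n/2}$ and the balls $B_j=B(x_j,\Lambda/\sqrt m)$ are pairwise disjoint. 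It then suffices to find $Z\in\abs{mD}$ whose rescaled real locus near each $x_j$ is $C^1$-close to the model $Q_{\lfloor\sqrt m\rfloor}$ of the first step: the $B_j$ being disjoint, such a $Z$ satisfies $\mathcal{N}_\Sigma(Z(\R))\ge N\cdot c_\Sigma(\sqrt m)^n\asymp m^{n}$. This is consistent with the available degrees of freedom because $N\cdot(\sqrt m)^{n}\asymp m^{n}\asymp\dim_\R H^0(X,\mathcal{O}(mD))$, and because the finest features of these models sit at scale $1/m$ in the original chart --- exactly the finest scale a section of $\mathcal{O}(mD)$ can resolve.

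\emph{The probabilistic step.} Let $s=\sum_i a_i\varepsilon_i$, with $(\varepsilon_i)$ an $L^2$-orthonormal basis of the real sections of $\mathcal{O}(mD)$ and $a_i$ i.i.d.\ standard Gaussians. For each $j$, the event $E_j$ that the rescaled $s$ is $\varepsilon$-close in $C^1$ on $B_j$ to $Q_{\lfloor\sqrt m\rfloor}$ has positive probability: the Gaussian measure has full support, and, by a small-ball estimate combined with the fact that every holomorphic germ is an $\mathcal O(mD)$-section in the rescaled limit, $\mathbf{P}(E_j)\ge e^{-c\,m^{n/2}}$. Using the quasi-independence above to bound each conditional probability from below by half the marginal, $\mathbf{P}\big(\bigcap_{j=1}^N E_j\big)\ge \prod_{j=1}^N\tfrac12\mathbf{P}(E_j)\ge e^{-c'\,m^{n}}>0$, so some section realises all the models simultaneously and its zero divisor is the desired $Z$. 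The statement for complete intersections should follow by applying the same scheme inductively along a generic flag.

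\emph{Where the difficulty lies.} The delicate point is the jump from the order $m^{n/2}$ --- the number of $\Sigma$-components of a \emph{typical} random real hypersurface, after Gayet--Welschinger --- to the maximal order $m^{n}$: inside each ball of the $1/\sqrt m$-net one must plant not a single copy of $\Sigma$ but a degree-$\sqrt m$ patchwork carrying $\asymp m^{n/2}$ of them, i.e.\ the very statement of the theorem at degree $\sqrt m$. The argument is thus self-referential, and the real work is to iterate this bootstrap (a number of times growing like $\log\log m$) while keeping the multiplicative constant bounded away from $0$, and to check at each scale that the prescribed local models never demand oscillations finer than the sub-wavelength scale $1/m$, beyond which no section of $\mathcal{O}(mD)$ can reach. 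Carrying this out --- rather than the comparatively soft probabilistic extraction of the previous step --- is, I expect, the heart of the proof.
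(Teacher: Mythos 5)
There is a genuine gap, and it sits exactly where you point to it yourself. Your scheme is built on the complex Fubini--Study/Bergman model, whose natural wavelength is $1/\sqrt m$; to reach the order $m^n$ you are then forced to plant, in each ball of radius $\Lambda/\sqrt m$, a local model $Q_{\lfloor\sqrt m\rfloor}$ whose complexity grows with $m$. At that point the two tools you invoke no longer apply: the local scaling limit (Bargmann--Fock) and the full-support/small-ball arguments concern a \emph{fixed} target function in a \emph{fixed} rescaled ball, and give no lower bound at all for the event of being $C^1$-close to a function oscillating at the relative scale $m^{-1/2}$. The asserted bound $\mathbf{P}(E_j)\ge e^{-cm^{n/2}}$ is not proved (it would require quantitative Cameron--Martin estimates for the rescaled polynomial $Q_{\lfloor\sqrt m\rfloor}$), and the step ``conditional probability $\ge$ half the marginal'', multiplied over $N\asymp m^{n/2}$ balls, is precisely the kind of quantitative quasi-independence for exponentially small events that is not implied by off-diagonal kernel decay and constitutes a substantial theorem in itself. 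Under this measure, hypersurfaces with $\sim m^n$ components are in fact exponentially rare, so any proof along your lines must produce sharp exponential estimates; your closing admission that the bootstrap ``is the heart of the proof'' is accurate, and that heart is missing.

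The paper avoids all of this by changing the Gaussian model rather than the local construction. It takes the Gaussian measure on real degree-$m$ polynomials on $\P^N$ induced by the $L^2$ product over the \emph{real} locus $\P^N(\R)$ (equivalently the sphere), restricted to $X$. For this measure the natural scale is $1/m$, and the covariance rescaled at that scale converges to the universal field with kernel $K(u,v)=\int_{B_{\R^N}(0,1)}e^{i\langle u-v,\xi\rangle}d\xi$, whose support contains all of $\mathscr{C}^{\infty}$ on a fixed ball. Hence one fixed model function cutting out a \emph{single} copy of $\Sigma$ in a ball of radius $R/m$ is realised with probability bounded below by a constant $c_\Sigma>0$ uniformly in the centre and in $m$ (Proposition \ref{barrier}, via \cite{LerarioStecconi} and Thom isotopy). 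Packing $\sim m^n$ disjoint such balls and using only linearity of expectation gives $\E\big(\mathcal N_\Sigma(Z(\R))\big)\ge cm^n$, and the theorem follows from $\sup\ge\E$. No patchworking, no exponentially small events, no independence or bootstrap is needed; the whole difficulty you describe is dissolved by choosing a measure whose typical member already has maximal-order topology, rather than trying to force a rare event in the Kostlan model.
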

Theorem \ref{thm existence} then follows from Theorem \ref{thm existence of components} by taking as $\Sigma$ any hypersurface with $b_i(\Sigma)\geq 1$ for any $i\in\{0,\dots,n-1\}$.

The proof of Theorem \ref{thm existence of components} is probabilistic and it is given in Section \ref{section probability}. The idea is to construct a probability measure on $\abs{mD}$ for which the expected value $\E\left(\mathcal{N}_{\Sigma}(Z(\R)) \right)$ of $\mathcal{N}_{\Sigma}(Z(\R))$, for $Z\in\abs{mD}$,  is at least $cm^n$, for $m$ large enough, where $c$ is a positive constant independent of $m$. This is the content of Theorem \ref{thm expected}.
Remark that in Section \ref{Section complete int} we will state a generalization of Theorem \ref{thm existence of components} for complete intersections.

\subsection{Existence of many lagrangians in complex hypersurfaces} We now give an application of Theorem \ref{thm existence of components} in symplectic geometry.
Recall that the complex projective space $\C\P^n$ is equipped with a natural symplectic form $\omega_{FS}$, called the Fubini-Study symplectic form. Any smooth complex hypersurface of $\C\P^n$ inherits, by restriction, a symplectic form. Recall also that a lagrangian  of a symplectic manifold $M$ is a smooth closed submanifold  of dimension half the dimension of $M$ and for which the restriction of the symplectic form is zero everywhere.
For example, $\R\P^n$ is a lagrangian of $\C\P^n$ and, more generally, the real locus of a real algebraic hypersurface of $\C\P^n$ is a lagrangian of its complex locus.
As an application of Theorem \ref{thm existence of components}, we obtain an easy proof of the following result by D. Gayet.
\begin{thm}\cite[Theorem 1.1]{GayetLagrangian}\label{lagr}
For any  smooth closed  hypersurface $\Sigma$ of $\R^n$, there exists $c>0$ and $m_0\in\mathbf{N}$ such that, for any $m\geq m_0$, any degree $m$ smooth complex hypersurface of $\C\P^n$ contains at least $cm^n$ pairwise disjoint lagrangians diffeomorphic to $\Sigma$.
\end{thm}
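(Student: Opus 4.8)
The plan is to reduce Theorem~\ref{lagr} to Theorem~\ref{thm existence of components}, applied to the real algebraic variety $X=\C\P^n$ with $D$ a hyperplane, together with the classical fact that any two smooth complex hypersurfaces of the same degree $m$ in $\C\P^n$ are symplectomorphic for the restricted Fubini--Study form, \emph{and} that such a symplectomorphism can be produced by a parallel transport in the universal family; in particular it carries Lagrangian submanifolds to Lagrangian submanifolds and preserves disjointness.

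First I would produce one good hypersurface. Since $\C\P^n$ is a real algebraic variety of dimension $n$ with nonempty real locus and a hyperplane $D$ is real and very ample, Theorem~\ref{thm existence of components} yields $c>0$ and $m_0\in\mathbf N$ such that for every $m\geq m_0$ there is $Z\in\abs{mD}$, a real hypersurface of degree $m$, with $\mathcal N_\Sigma(Z(\R))\geq cm^n$. We may take $Z$ smooth: either directly from the probabilistic construction (the underlying measure being of Gaussian type, almost every element of $\abs{mD}$ is smooth), or after a small perturbation of $Z$ inside $\abs{mD}$, which by Bertini's theorem can be made smooth and which, being $C^\infty$-close to $Z$ near each of the $\geq cm^n$ components of $Z(\R)$ diffeomorphic to $\Sigma$, preserves their diffeomorphism type and their disjointness. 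As recalled in the introduction, $Z(\R)$ is then a Lagrangian submanifold of $(Z(\C),\omega_{FS}|_{Z(\C)})$; hence the $\geq cm^n$ connected components of $Z(\R)$ diffeomorphic to $\Sigma$ are pairwise disjoint Lagrangians of $Z(\C)$ diffeomorphic to $\Sigma$. This proves the statement for the particular hypersurface $Z(\C)$.

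Next I would transport these Lagrangians to an arbitrary smooth complex hypersurface $W\subset\C\P^n$ of degree $m$. Let $U\subset\P(H^0(\C\P^n,\mathcal O(m)))$ be the complement of the discriminant hypersurface; $U$ is connected, being the complement of a complex hypersurface in a projective space. Set $\mathcal Z=\{(x,[f])\in\C\P^n\times U:\ f(x)=0\}$ and let $\pi\colon\mathcal Z\to U$ be the second projection, so that $\pi^{-1}(t)=Z_t(\C)$ is the smooth degree $m$ hypersurface corresponding to $t$. Since $\mathcal Z$ is a smooth complex submanifold of the K\"ahler manifold $\C\P^n\times\P(H^0(\C\P^n,\mathcal O(m)))$, the restriction $\Omega$ to $\mathcal Z$ of the product K\"ahler form is a symplectic form, and on each fiber it restricts to $\omega_{FS}|_{Z_t(\C)}$ (the second projection being constant there). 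The map $\pi$ is a proper submersion, its fibers being compact complex hypersurfaces of $\C\P^n$; hence $H=\ker(d\pi)^{\perp_\Omega}$ is a horizontal distribution, and the symplectic parallel transport along a path in $U$ is defined for all times (compactness of the fibers makes the horizontal lifts complete) and is a symplectomorphism between the fibers: the restriction to a fiber of $\mathcal L_{\tilde X}\Omega$ vanishes, since $\mathcal L_{\tilde X}\Omega=d(\iota_{\tilde X}\Omega)$ by closedness of $\Omega$ and $\iota_{\tilde X}\Omega$ annihilates vertical vectors. Joining in $U$ the points corresponding to $Z(\C)$ and to $W$ gives a symplectomorphism $\phi\colon (Z(\C),\omega_{FS}|)\to(W,\omega_{FS}|)$.

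Finally, $\phi$ carries the $\geq cm^n$ pairwise disjoint Lagrangian copies of $\Sigma$ in $Z(\C)$ to $\geq cm^n$ pairwise disjoint Lagrangian copies of $\Sigma$ in $W$, which is exactly the conclusion of Theorem~\ref{lagr}. The only real obstacle is the transport step, i.e.\ justifying that two smooth hypersurfaces of the same degree are symplectomorphic through a Lagrangian-preserving map; the two facts that make it work are the identification of the fiberwise restriction of the product K\"ahler form with $\omega_{FS}$ and the compactness of the fibers. Everything else is an immediate consequence of Theorem~\ref{thm existence of components} and of the already-recalled fact that the real locus of a smooth real hypersurface of $\C\P^n$ is Lagrangian.
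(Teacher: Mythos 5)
Your proposal is correct and follows essentially the same route as the paper: apply Theorem \ref{thm existence of components} to $\C\P^n$ with the hyperplane divisor, note that the real locus components are Lagrangians of the complex locus, and transport them to an arbitrary smooth degree $m$ hypersurface using the fact that all such hypersurfaces are symplectomorphic. The only difference is that the paper invokes this last fact without proof, whereas you supply the standard justification (smoothness via genericity/perturbation, plus symplectic parallel transport in the universal family over the connected complement of the discriminant), which is a welcome but not essentially different elaboration.
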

\begin{proof}
 Theorem \ref{thm existence of components} provides smooth complex hypersurfaces of degree $m$ with at least $cm^n$ pairwise disjoint lagrangians diffeomorphic to $\Sigma$ (as the real locus of a real algebraic hypersurface is lagrangian). As, from a symplectic point of view, all degree $m$ smooth complex hypersurfaces of $\C\P^n$ are isomorphic, this implies that  \textit{any} degree $m$ smooth complex hypersurface contains at least $cm^n$ pairwise disjoint lagrangians diffeomorphic to $\Sigma$.
 \end{proof}
 
Actually, the same result and argument  works for any K\"ahler $(X,\omega)$ equipped with an ample divisor $D$ with $\omega\in c_1(D)$, that \emph{can be defined over $\R$}.
Although our proof is simpler than the one in \cite{GayetLagrangian} (because we do not use any quantitative Moser-type argument), it should be noted that in \cite{GayetLagrangian} this result is proved for any complex projective variety $X$, and not just for those that can be defined over $\R$.

It is worth noticing that the order $m^n$ appearing in Theorem \ref{lagr} is optimal when the Euler characteristic of $\Sigma$ is nonzero. Indeed, as remarked in \cite[Corollary 1.2]{GayetLagrangian}, if $\chi(\Sigma)\neq 0$ and  $Z_m$ denotes  a smooth degree $m$ complex hypersurface of $\C\mathbf{P}^n$, then (the homology class of) the disjoint langrangians  diffeomorphic to $\Sigma$ are linearly independent in $H_{n-1}(Z_m,\mathbf{Z})$   and the dimension of $H_{n-1}(Z_m,\mathbf{Z})$ grows exactly as $m^n$ when $m\rightarrow\infty$.
\subsection{Existence of complete intersections with prescribed topology}\label{Section complete int} 
We can actually prove all the previous results not only for hypersurfaces, but more generally for complete intersections. For this, recall that, as for the case of hypersurfaces, the topology of the complex locus of a  complete intersection $Z_1\cap\dots\cap Z_r$, with $Z_i\in \abs{mD}$, does not depend on the choice of the hypersurfaces $Z_i$, if they are chosen generically. One can then compute the asymptotics for the total Betti number of $Z_1(\C)\cap\dots\cap Z_r(\C)$ for generic $Z_i\in\abs{mD}$ and get  
\[b(Z_1(\C)\cap\dots\cap Z_r(\C))=\binom{n-1}{r-1}D^nm^n+O(m^{n-1}) \] 
as $m\rightarrow\infty$, see \cite[Proposition 2.2]{anc6}.
In this setting, we have the following result.
 \begin{thm}\label{thm existence of components 2} Let $X$ be a real algebraic variety of dimension $n$ and $D$ be a real very ample divisor of $X$. For any $r$-codimensional  closed  submanifold $\Sigma$ of $\R^n$ with trivial normal bundle,
  there exists $c>0$ and $m_0\in\mathbf{N}$ such that for any $m\geq m_0$  there exists $Z_1,\dots,Z_r\in\abs{mD}$ with $$\mathcal{N}_{\Sigma}\left(Z_1(\R)\cap\dots\cap Z_r(\R)\right)\geq cm^{n}.$$
\end{thm}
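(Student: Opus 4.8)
\emph{Idea of proof.} The plan is to adapt the probabilistic argument behind Theorem~\ref{thm expected}: I would build a probability measure on $\abs{mD}^{\times r}$ — sampling $r$ \emph{independent} sections $s_1,\dots,s_r$ from the measure used there and setting $Z_i=\{s_i=0\}$ — and show that $\E\big(\mathcal{N}_{\Sigma}(Z_1(\R)\cap\dots\cap Z_r(\R))\big)\ge c\,m^n$ for $m$ large, from which the existence of $Z_1,\dots,Z_r$ with the stated property follows at once. The proof of Theorem~\ref{thm expected} would then be repeated almost verbatim, the only new ingredient being a local ``model'' in which $\Sigma$ is realised as a \emph{regular complete intersection} of $r$ local hypersurfaces; this is the one place where the triviality of the normal bundle of $\Sigma$ enters.

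For the local model: since $\Sigma$ is a compact $(n-r)$-dimensional submanifold of $\R^n$ with trivial normal bundle, I would embed $\Sigma$ in the open unit ball $B\subset\R^n$ with a tubular neighbourhood identified with $\Sigma\times B^r$ (where $B^r$ is the closed unit ball of $\R^r$ and $\Sigma$ corresponds to $\Sigma\times\{0\}$), and choose a smooth $h=(h_1,\dots,h_r)\colon B\to\R^r$ that agrees with the projection $\Sigma\times B^r\to B^r$ near $\Sigma$, satisfies $h^{-1}(0)=\Sigma$, and has $|h|\ge\delta>0$ on $B\setminus(\Sigma\times\tfrac12 B^r)$; along $\Sigma$ the differential $dh$ is then surjective. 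The key point is $C^1$-stability: there is $\varepsilon>0$ such that every $h'=(h_1',\dots,h_r')$ with $\|h'-h\|_{C^1(B)}<\varepsilon$ has $|h'|>0$ off the tube $\Sigma\times\tfrac12 B^r$, so $(h')^{-1}(0)$ is a compact submanifold contained in the interior of $B$, and by a standard transversality and isotopy argument (Thom) it is diffeomorphic to $h^{-1}(0)=\Sigma$. Thus in the rescaled picture around a ``site'' of $X(\R)$, whenever the rescaled $r$-tuple of perturbed sections is $C^1$-close to such an $h$, its common real zero locus contains — well inside the site, hence disjoint from the other sites — a subset of $Z_1(\R)\cap\dots\cap Z_r(\R)$ that is open, closed and diffeomorphic to $\Sigma$.

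It remains to count. Following the scheme of Theorem~\ref{thm expected}, for $m$ large one has $\sim c_0 m^n$ pairwise disjoint sites and, after the appropriate rescaling, each single $s_i$ is $C^1$-close to a prescribed local function with probability bounded below uniformly in $m$ and in the site; since $s_1,\dots,s_r$ are independent, the rescaled $r$-tuple is within $\varepsilon$ of $h$ with probability at least some $p_0>0$, again uniformly. Writing $E_q$ for this event at the site $q$ — which, when it occurs, contributes a copy of $\Sigma$ in $Z_1(\R)\cap\dots\cap Z_r(\R)$ distinct from those of the other sites — linearity of expectation gives, with no independence needed between distinct sites,
\[
\E\big(\mathcal{N}_{\Sigma}(Z_1(\R)\cap\dots\cap Z_r(\R))\big)\ \ge\ \sum_{q}\P(E_q)\ \ge\ c_0 p_0\, m^n=:c\,m^n ,
\]
so some choice of $Z_1,\dots,Z_r\in\abs{mD}$ achieves $\mathcal{N}_{\Sigma}\ge c\,m^n$.

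I expect the main obstacle to be the local model, not the probabilistic bookkeeping. One must produce $h$ on all of $B$ with $h^{-1}(0)$ equal to $\Sigma$ \emph{and} $|h|$ bounded below outside a small tube — so that a $C^1$-perturbation creates no extra zeros near $\partial B$ and the surviving zero locus is a genuine open-and-closed piece of the global complete intersection rather than a fragment of some larger component — and one must check that the perturbed zero locus stays diffeomorphic to $\Sigma$; this forces one to use that $\Sigma$ has trivial normal bundle (so as to cut it out by $r$ functions with surjective differential) and to carry out the transversality and isotopy analysis in codimension $r$. By contrast, extending the rescaling and small-probability estimates of Theorem~\ref{thm expected} from one section to $r$ independent sections should be routine: independence makes the positive-probability approximation events for the individual $s_i$ multiply, and the transversality of the $r$ local hypersurfaces near a site is automatic from the surjectivity of $dh$.
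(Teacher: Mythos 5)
Your overall strategy is the one the paper uses: product Gaussian measure on $r$ independent sections, a packing of $\sim m^n$ disjoint balls of radius $\sim R/m$, a local model $h\colon B\to\R^r$ cutting out $\Sigma$ transversally (this is where the trivial normal bundle enters), $C^1$-stability via Thom's isotopy lemma so that a nearby zero set is an open-and-closed copy of $\Sigma$ inside the ball, and linearity of expectation. All of that matches the paper's proof of Theorem \ref{thm expected} and its Lemma \ref{stability}. But there is a genuine gap at the step you dismiss as ``routine probabilistic bookkeeping'': the claim that, after rescaling at a site, each $s_i$ lies in a prescribed $C^1$-neighbourhood of a fixed model function with probability bounded below \emph{uniformly in $m$ and in the site}. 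This is precisely Proposition \ref{barrier}, and it is the heart of the paper, occupying all of Section \ref{section proof prop}; nothing in your proposal supplies it. At finite $m$ you are looking at a specific Gaussian ensemble of degree-$m$ polynomials restricted to $X(\R)$: the model $h$ need not lie in (nor be obviously approximable, with uniformly positive Gaussian mass, by) the relevant finite-dimensional space of local functions, so the positivity of $\P(E_q)$ is a nontrivial statement about the support of the measure, and its uniformity in $m$ and $x$ requires a scaling-limit argument.

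The paper obtains this bound in three steps: (i) the rescaled covariance kernel has a universal local limit (H\"ormander's asymptotics, giving Lemma \ref{lemma converg}); (ii) the limit field is band-limited, with spectral measure the Lebesgue measure on the unit ball, and Lemma \ref{support} shows by an explicit Fourier/spectral-measure argument that its support is nevertheless all of $\mathscr{C}^{\infty}(\bar{B},\R)$ --- this is not automatic, since the support is the closed span of the functions $K_v$, i.e.\ of functions with Fourier transform supported in the ball; (iii) a transfer theorem of Lerario--Stecconi converts positive probability for the limit field into a uniform positive lower bound at finite $m$. An alternative would be a quantitative barrier construction with peak sections as in Gayet--Welschinger \cite{gw2}, but some argument of this kind is indispensable. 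Relatedly, your expectation that ``the main obstacle is the local model'' is inverted: the local model and its $C^1$-stability are the short, standard part (a few lines in the paper), while the uniform local probability estimate is where the real work lies. The remaining ingredients of your sketch --- independence across the $r$ factors, the $m^n$ packing bound, distinctness of components produced at distinct sites, and linearity of expectation --- are fine and coincide with the paper's argument.
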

Theorem \ref{thm existence of components} is exacly Theorem \ref{thm existence of components 2} for $r=1$. Remark indeed that any closed hypersurface of $\R^n$ has trivial normal bundle.
On the contrary, it is worth noting that Theorem \ref{thm existence of components 2} is not a consequence of Theorem \ref{thm existence of components} by some induction, for example by considering  $Z_1\cap Z_2$ as a hypersurface of $Z_1$ and then applying Theorem \ref{thm existence of components} to $Z_1$. Indeed the hypersurfaces $Z_1,Z_2\in\abs{mD}$  have degree  that varies with $m$, and thus no induction can be applied since Theorem \ref{thm existence of components} requires a fixed ambient variety. Even if one tried to get around this problem, fixed once for all one of the hypersurfaces, say $Z_1$, and considered $Z_1\cap Z_2$ as a hypersurface of  $Z_1$, then applying Theorem \ref{thm existence of components} to $Z_1$ one would  get only $cm^{n-1}$ connected components diffeomorphic to $\Sigma$, and not $cm^{n}$. 

\begin{oss} Analogues of Theorems \ref{thm existence} and \ref{lagr} can then be translated for complete intersections and are direct consequences of the Theorem \ref{thm existence of components 2}, in the same way that Theorems \ref{thm existence} and \ref{lagr} are consequences of the Theorem \ref{thm existence of components}.
\end{oss}
\subsection{Organization of the paper} The paper is organized as follows.\\ 
 In Section \ref{section probability}, we define a probability measure on the linear system $\abs{mD}$ and prove Theorem \ref{thm existence of components 2} admitting a result, namely Proposition \ref{barrier}. In Section \ref{section proof prop}, we prove Proposition \ref{barrier}.
 In Section \ref{section questions} we make some comments on this paper and on  related works and we adress some questions arising from this work.

\subsection*{Acknowledgments} This paper was written while I was postdoc at the Institut de Recherche Math\'ematique Avanc\'ee, Universit\'e de Strasbourg. I thank the IRMA for the excellent conditions given to me during my postdoc.
 
\section{Probability measures on the linear systems and proof of the main result}\label{section probability}
In this section we equip the linear system $\abs{mD}$ with a probability measure. Here $D$ is a very ample divisor of a real algebraic variety $X$.  With respect to this probability measure, we estimate the expected value of $\mathcal{N}_{\Sigma}(Z_1(\R)\cap\dots\cap Z_r(\R))$, for $Z_i\in\abs{mD}$. This is the content of the Theorem \ref{thm expected}, which directly implies Theorem \ref{thm existence of components 2}.

\subsection{Linear systems and hyperplane sections}\label{subsec hyperplane} The very ample divisor $D$ embeds $X$ into $\P^N$, for some $N\in\mathbf{N}$.  We fix once for all such an embedding and identify $X$ with its image in  $\P^N$. A real algebraic hypersurface $Z\in\abs{D}$ is then obtained as a intersection $H\cap X$ of $X$ with an hyperplane $H\in \abs{\mathcal{O}(1)}$ defined over $\R$. More generally, given a degree $m$ real algebraic hypersurfaces $H$ in $\P^N$, its intersection with $X$ defines a real algebraic hypersurface $Z:=H\cap X$ of $X$ in the linear system $\abs{mD}$. Note however that not all hypersurfaces lying in $\abs{mD}$ are of the form $H\cap X$, for  $H\in\abs{\mathcal{O}(m)}$, that is, the  map $H\in\abs{\mathcal{O}(m)}\mapsto H\cap X \in \abs{mD}$ is not surjective in general, when $m>1$.

\subsection{Probability measure on $\abs{mD}$} Let $\R^{hom}_m[X_0,\dots,X_N]$ be the space of real homogeneous polynomials of degree $m$ in $N+1$ variables. This space coincides with the space of global algebraic sections  of the line bundle $\mathcal{O}(m)$ on $\P^N$, which are defined over $\R$.
We endow the space $\R^{hom}_m[X_0,\dots,X_N]$ with a Gaussian probability measure $\mu_m$ as follows.  Let $h_m$ be the Fubini-Study metric on the line bundle $\mathcal{O}(m)$ and let $g_{FS}$ be the Riemannian Fubini-Study metric on $\P^N(\R)$. Then, for any pair of real polynomials $P_1,P_2\in \R^{hom}_m[X_0,\dots,X_N]$ we define 
\begin{equation}\label{L2 scalar product}
\langle P_1,P_2\rangle_{L^2}=\int_{\P^N(\R)}h_m(P_1,P_2)d\mathrm{vol}_{FS}.
\end{equation}
This scalar product induces a Gaussian measure $\mu_m$ on $\R^{hom}_m[X_0,\dots,X_N]$ defined by
\begin{equation}
\mu_m(U)=\frac{1}{\sqrt{\pi}^{d_m}}\int_{U}e^{-\norm{P}_{L^2}^2}dP
\label{gaussian measure}\end{equation}
for any open set $U\subset\R^{hom}_m[X_0,\dots,X_N]$, where $d_m=\dim \R^{hom}_m[X_0,\dots,X_N]$.

For any polynomial $P\in \R^{hom}_m[X_0,\dots,X_N]$, we denote by $V_P=\{P=0\}\subset \P^N$ the real algebraic hypersurface defined by $P$ and by $Z_P:=V_P\cap X$ the real algebraic hypersurface of $X$ defined by $P_{\mid X}$. As said Section \ref{subsec hyperplane},  the hypersurface $Z_P$ is an element of the linear system $\abs{mD}$.
\begin{defn}\label{defprob} The map $\varphi:P\in\R^{hom}_m[X_0,\dots,X_N]\mapsto Z_P\in \abs{mD}$ induces a probability measure $\mathrm{Prob}_m:=\varphi_*\mu_m$ on $\abs{mD}$. For any $r\in\{1,\dots,n\}$ we equip the $r$-fold cartesian products $\R^{hom}_m[X_0,\dots,X_N]^r$ and  $\abs{mD}^r$ with the product measures, that we still denote respectively by $\mu_m$ and $\mathrm{Prob}_m$. 
\end{defn}
\begin{oss}
By what we said in Section \ref{subsec hyperplane}, the probability measure $\mathrm{Prob}_m$ is degenerate, that is, it is supported in a proper subspace of $\abs{mD}^r$. 
\end{oss}
\subsection{Expected value of $\mathcal{N}_{\Sigma}(Z(\R))$} Let $\Sigma$ be a smooth closed   $r$-codimensional submanifold of $\R^n$ with trivial normal bundle. We are interested in the random variable $$(Z_1,\dots,Z_r)\in\abs{mD}^r\mapsto \mathcal{N}_{\Sigma}\left(Z_1(\R)\cap\dots\cap Z_r(\R)\right),$$ where $\mathcal{N}_{\Sigma}$ denotes the number of connected components that are diffeomorphic to $\Sigma$, see Definition \ref{defN}. The next theorem estimates from below the expectation 
\begin{multline*}
\E\left(\mathcal{N}_{\Sigma}(Z_1(\R)\cap\dots\cap Z_r(\R)) \right) \\
=\displaystyle\int_{(Z_1,\dots,Z_r)\in\abs{mD}^r}\mathcal{N}_{\Sigma}\left(Z_1(\R)\cap\dots\cap Z_r(\R)\right)d\mathrm{Prob}_m(Z)
\end{multline*}
where $\mathrm{Prob}_m$ is the probability measure on  $\abs{mD}^r$ defined in Definition \ref{defprob}.

\begin{thm}\label{thm expected} There exists $c>0$ and $m_0$ such that for any $m\geq m_0$  we have 
$$\E\left(\mathcal{N}_{\Sigma}(Z_1(\R)\cap\dots\cap Z_r(\R)) \right)\geq cm^n.$$
\end{thm}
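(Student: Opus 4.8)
The plan is to use a local-to-global probabilistic argument inspired by Gayet--Welschinger's work on random real algebraic geometry. The core idea is the following: cover a large portion of $X(\R)$ by roughly $Cm^n$ disjoint balls, each of which, after rescaling by $\sqrt{m}$, looks like a large ball in $\R^n$ equipped with a flat metric. In each such rescaled ball I want to find, with probability bounded below by a constant $p>0$ independent of $m$ and of the ball, a "barrier": a configuration of the random polynomials $P_1,\dots,P_r$ such that the intersection $Z_{P_1}(\R)\cap\dots\cap Z_{P_r}(\R)$ has, inside that ball, a connected component diffeomorphic to $\Sigma$ \emph{and} this component is isolated from the rest of the real locus (e.g.\ the intersection is empty in an annular region surrounding it, so the component cannot be glued to anything outside the ball). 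The existence of such a barrier configuration with uniformly positive probability is exactly the content of the deterministic-plus-probabilistic input I am allowed to assume, Proposition~\ref{barrier}. Linearity of expectation then gives $\E(\mathcal{N}_\Sigma(Z_1(\R)\cap\dots\cap Z_r(\R)))\geq p\cdot Cm^n = cm^n$.

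Concretely, the steps I would carry out are: (1) Fix a point $x_0\in X(\R)$ and a Darboux-type chart; choose a maximal family of points $x_1,\dots,x_{K_m}\in X(\R)$ that are $\frac{R}{\sqrt m}$-separated for a fixed large radius $R$, so that $K_m\geq C m^n$ by a volume/packing argument (the Fubini--Study volume of $X(\R)$ is a fixed positive number, and the scaling $\operatorname{vol}(B(x,\frac{R}{\sqrt m}))\asymp (R/\sqrt m)^n$ gives the count). (2) For each $i$, let $E_i$ be the event that the random tuple $(Z_{P_1},\dots,Z_{P_r})$ contains, in the ball $B(x_i,\frac{R}{\sqrt m})$, an isolated component diffeomorphic to $\Sigma$; by Proposition~\ref{barrier} (applied after rescaling, using the fact that the rescaled random field converges to a universal Gaussian field on $\R^n$ — the Bargmann--Fock field or its complete-intersection analogue), $\mathrm{Prob}_m(E_i)\geq p>0$ for $m\geq m_0$, uniformly in $i$. (3) Observe $\mathcal{N}_\Sigma(Z_1(\R)\cap\dots\cap Z_r(\R))\geq \sum_{i=1}^{K_m}\mathbf{1}_{E_i}$, since the barrier ensures the components found in different balls are genuinely distinct connected components of the global real locus and each is honestly diffeomorphic to $\Sigma$ (not merely containing a copy of it). (4) Take expectations: $\E(\mathcal{N}_\Sigma)\geq \sum_{i}\mathrm{Prob}_m(E_i)\geq p K_m\geq pCm^n=:cm^n$.

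The main obstacle — and the reason Proposition~\ref{barrier} is isolated as a separate statement to be proved later — is step (2): establishing that in a single rescaled ball the random polynomials produce, with probability bounded below independently of $m$, a hypersurface (resp.\ complete intersection) that contains an isolated component diffeomorphic to the prescribed $\Sigma$. This requires two ingredients that must be combined carefully. First, a \emph{deterministic} construction: one needs a fixed polynomial map on $\R^n$ (or a local model) whose zero set realizes $\Sigma$ as an isolated component inside a ball, with some quantitative transversality/stability so that small $C^0$ (really $C^1$) perturbations preserve the diffeomorphism type and the isolation. Second, a \emph{probabilistic estimate} showing the rescaled Gaussian field is, with uniformly positive probability, $C^1$-close on the ball to that fixed model — this is where the off-diagonal decay of the Bergman kernel and the Gaussian small-ball/anti-concentration estimates enter, and where one must be careful that the measure $\mathrm{Prob}_m$, although degenerate on $\abs{mD}^r$ (it comes from polynomials on $\P^N$ restricted to $X$), still has enough "local flexibility" near each $x_i$ — i.e.\ the evaluation-and-jets map at $x_i$ is surjective and the induced Gaussian on jets converges to the nondegenerate universal one. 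That asymptotic normality of the rescaled field, together with the barrier construction, is precisely what Proposition~\ref{barrier} packages, so modulo it the proof of Theorem~\ref{thm expected} is the short linearity-of-expectation argument sketched above.

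\begin{proof}[Proof of Theorem~\ref{thm expected}]
We defer to Proposition~\ref{barrier} the construction, with uniformly positive probability, of an isolated component diffeomorphic to $\Sigma$ in a fixed rescaled ball, and here we only explain how Theorem~\ref{thm expected} follows from it by a packing argument and linearity of expectation. Since $\operatorname{vol}_{FS}(X(\R))$ is a fixed positive real number and, in suitable charts, the metric balls of radius $R/\sqrt m$ have volume comparable to $(R/\sqrt m)^n$, a maximal $\frac{R}{\sqrt m}$-separated family of points $x_1,\dots,x_{K_m}\in X(\R)$ has cardinality $K_m\geq C m^n$ for some $C=C(R,X,D)>0$ and all $m$ large. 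Proposition~\ref{barrier} provides $p>0$ and $m_0$ such that, for $m\geq m_0$ and each $i$, the event $E_i$ that $(Z_{P_1},\dots,Z_{P_r})$ has, inside $B(x_i,\frac{R}{\sqrt m})$, a connected component of the real locus diffeomorphic to $\Sigma$ and isolated from the complement of the ball, satisfies $\mathrm{Prob}_m(E_i)\geq p$, uniformly in $i$. The balls being disjoint, the components produced in distinct balls are distinct connected components of $Z_1(\R)\cap\dots\cap Z_r(\R)$, hence
\[
\mathcal{N}_{\Sigma}\big(Z_1(\R)\cap\dots\cap Z_r(\R)\big)\ \geq\ \sum_{i=1}^{K_m}\mathbf{1}_{E_i}.
\]
Taking expectations and using linearity,
\[
\E\big(\mathcal{N}_{\Sigma}(Z_1(\R)\cap\dots\cap Z_r(\R))\big)\ \geq\ \sum_{i=1}^{K_m}\mathrm{Prob}_m(E_i)\ \geq\ p\,K_m\ \geq\ p\,C\,m^n,
\]
which is the claim with $c:=pC$.
\end{proof}
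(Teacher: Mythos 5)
Your overall strategy (pack $X(\R)$ with disjoint small balls, apply Proposition~\ref{barrier} in each, use linearity of expectation) is exactly the paper's argument, and the point you make about a component of $Z_1(\R)\cap\dots\cap Z_r(\R)\cap B$ diffeomorphic to the closed manifold $\Sigma$ being automatically a genuine component of the global real locus is fine. However, as written your proof contains a genuine error of scale. Proposition~\ref{barrier} is stated for balls of radius $\frac{R}{m}$, not $\frac{R}{\sqrt m}$: the Gaussian measure $\mathrm{Prob}_m$ of the paper comes from the $L^2$ product $\int_{\P^N(\R)}h_m(P_1,P_2)\,d\mathrm{vol}_{FS}$ over the \emph{real} locus, whose covariance has its universal limit at scale $m^{-1}$, with limit kernel $K(u,v)=\int_{B_{\R^N}(0,1)}e^{i\langle u-v,\xi\rangle}d\xi$ (a band-limited field), not the Bargmann--Fock field at scale $m^{-1/2}$ that you invoke; the latter is the correct picture only for the complex Fubini--Study (Kostlan-type) ensembles, which is precisely the regime where expected Betti numbers are of order $m^{n/2}$.

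This is not merely a cosmetic issue, because your packing count is then wrong: a maximal $\frac{R}{\sqrt m}$-separated family in the fixed-volume manifold $X(\R)$ has cardinality comparable to $\mathrm{Vol}(X(\R))/(R/\sqrt m)^n\asymp m^{n/2}$, not $\geq Cm^n$ as you claim, so the chain of inequalities you display only yields $\E\big(\mathcal{N}_\Sigma\big)\geq c\,m^{n/2}$, which is strictly weaker than the theorem. Moreover, at radius $\frac{R}{\sqrt m}$ you cannot literally apply Proposition~\ref{barrier}, which concerns $B\big(x,\frac{R}{m}\big)$. The fix is immediate and brings you back to the paper's proof: pack with disjoint balls of radius $\frac{R}{m}$ (so that $K_m\geq C m^n$ by the same volume argument), apply Proposition~\ref{barrier} verbatim in each ball, and conclude by linearity of expectation.
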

Theorem \ref{thm expected} directly implies Theorem \ref{thm existence of components 2}, since we have the trivial  bound $$\sup_{(Z_1,\dots,Z_r)\in\abs{mD}^r}\mathcal{N}_{\Sigma}\big(Z_1(\R)\cap\dots\cap Z_r(\R)\big)\geq \E\left(\mathcal{N}_{\Sigma}(Z_1(\R)\cap\dots\cap Z_r(\R)) \right).$$

In the rest of the section we prove Theorem \ref{thm expected}. For this, remark that the real locus $X(\R)$ of $X$ inherits from $\P^N(\R)$ a Riemannian metric $g:=g_{FS \mid X(\R)}$. For any point $x\in X(\R)$, we denote by $B(x,R)$ the geodesic ball of radius $R$ around $x$.

Theorem \ref{thm expected} will be a direct consequence of the following proposition.
\begin{prop}\label{barrier} There exists $R,c_\Sigma>0$ and $m_0$ such that for any point $x\in X(\R)$ and any $m\geq m_0$ we have 
\[\mathrm{Prob}_m\bigg\{(Z_1,\dots,Z_r)\in\abs{mD}^r,\hspace{1mm} \mathcal{N}_{\Sigma}\left(Z_1(\R)\cap\dots\cap Z_r(\R)\cap B\big(x,\frac{R}{m}\big)\right)\geq 1\bigg\}\geq c_\Sigma.\]
\end{prop}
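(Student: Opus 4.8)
The plan is to reduce the statement to a local, quantitative question about random polynomials and to answer it by exhibiting, with uniformly positive probability, an explicit deterministic model near each point $x\in X(\R)$. Fix a point $x$. Using the fact that $D$ is very ample, I would choose a real polynomial section $s_0\in\R^{hom}_m[X_0,\dots,X_N]$ of ``peak'' type concentrated around $x$: after rescaling the geodesic ball $B(x,R/m)$ to a ball of fixed radius $R$ in $\R^n$, the local picture converges (as $m\to\infty$) to a fixed model on Euclidean space, where the rescaled Fubini--Study data converge to the flat Bargmann--Fock model. In this scaling limit, the hypersurface $\{P_{\mid X}=0\}$ for $P$ close to a prescribed quadratic-like profile looks like a fixed affine hypersurface of $\R^n$; by choosing the profile appropriately one can arrange that a small perturbation of it contains a connected component diffeomorphic to $\Sigma$ inside the unit ball. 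Here one uses that $\Sigma$ has trivial normal bundle, so that a tubular neighborhood of $\Sigma$ embeds in $\R^n$ as the zero set of $r$ independent functions, which is exactly the local model for a codimension-$r$ complete intersection $Z_1\cap\dots\cap Z_r$.

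The key steps, in order, are: (1) set up the rescaling map $x\mapsto$ (local chart on $X(\R)$ near $x$), normalized so that $B(x,R/m)$ becomes the fixed ball $B(0,R)\subset\R^n$, and record the convergence of the rescaled $L^2$ inner product \eqref{L2 scalar product} to the Bargmann--Fock inner product, uniformly in $x$ (this is the standard Bergman-kernel near-diagonal asymptotics, with uniformity following from compactness of $X(\R)$); (2) fix once and for all, independently of $x$ and $m$, an open set $\mathcal{U}$ of $r$-tuples of functions on $B(0,R)$ such that every element of $\mathcal{U}$ has a common zero locus containing a component diffeomorphic to $\Sigma$ — this is a purely differential-topological construction using the trivial normal bundle of $\Sigma$ and an isotopy/transversality argument, and it is an \emph{open} condition in the $C^1$ topology; (3) transfer $\mathcal{U}$ through the rescaling to an open set $\mathcal{U}_{x,m}\subset\abs{mD}^r$ and show that its $\mathrm{Prob}_m$-measure is bounded below by a constant independent of $x$ and $m$. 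Step (3) follows because the rescaled Gaussian measures converge to the Bargmann--Fock Gaussian, which assigns positive mass to the (fixed, open, nonempty) event described by $\mathcal{U}$; uniformity in $x$ again comes from compactness of $X(\R)$, so one may take $c_\Sigma$ to be, say, half the Bargmann--Fock mass of that event.

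The main obstacle I expect is step (1) together with the uniformity claim in step (3): one must control the near-diagonal asymptotics of the relevant Bergman kernel \emph{uniformly over all $x\in X(\R)$}, and pass from convergence of the kernels to convergence of the induced Gaussian measures on the finite-dimensional local jet space, in a way strong enough that the probability of a $C^1$-open event converges uniformly. A subtlety here is that $\mathrm{Prob}_m$ on $\abs{mD}^r$ is degenerate (it is the pushforward under $\varphi$ of the measure on homogeneous polynomials on $\P^N$, not the full Gaussian on $\abs{mD}^r$); so one should work upstream on $\R^{hom}_m[X_0,\dots,X_N]^r$, where the Gaussian \eqref{gaussian measure} is genuine, prove the estimate for a suitable event there, and then push it forward. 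Once the scaling limit and the uniform lower bound on the limiting measure of the model event are in hand, the conclusion is immediate. I would isolate the analytic input as a separate statement — presumably this is what Proposition~\ref{barrier} is reduced to in Section~\ref{section proof prop}.
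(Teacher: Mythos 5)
Your overall architecture (rescale at scale $1/m$, identify a limiting Gaussian field on a fixed ball, exhibit a fixed open event whose zero sets contain a component diffeomorphic to $\Sigma$, get uniformity in $x$ from the universality/compactness, and work upstream on $\R^{hom}_m[X_0,\dots,X_N]^r$ because $\mathrm{Prob}_m$ is a degenerate pushforward) matches the paper's strategy. But there are two genuine problems. First, the limit model is misidentified: the Gaussian here comes from the \emph{real} $L^2$-product \eqref{L2 scalar product}, i.e.\ integration over $\P^N(\R)$ (equivalently $S^N$), and at scale $1/m$ its covariance converges to the band-limited kernel $K(u,v)=\int_{B_{\R^N}(0,1)}e^{i\langle u-v,\xi\rangle}d\xi$, not to the Bargmann--Fock kernel. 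Bargmann--Fock is the scaling limit of the \emph{complex} Fubini--Study ensemble \eqref{complex scalar product}, and it appears at scale $m^{-1/2}$; with that ensemble one could only pack $\sim m^{n/2}$ balls, which is exactly why the paper uses the real $L^2$ measure. So the model you would be perturbing is the wrong one, and the scale you keep ($R/m$) is incompatible with the model you invoke.

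Second, and more seriously, your step (3) assumes that the limiting Gaussian ``assigns positive mass to the (fixed, open, nonempty) event'' $\mathcal{U}$. For a Gaussian measure on an infinite-dimensional function space this is not automatic: a nonempty open set has positive mass only if it meets the topological support of the field, which is the closure of its reproducing-kernel (Cameron--Martin) space. For the band-limited limit field arising here this is precisely the delicate point, since its reproducing-kernel space consists of functions with Fourier transform supported in the unit ball, and one must prove that such functions are dense in $\mathscr{C}^{\infty}\left(\bar{B}_{\R^n}(0,R),\R\right)$ — this is the content of Lemma \ref{support} in the paper, proved by an explicit Fourier-analytic approximation of monomials, and it is the main technical input behind Lemma \ref{stability} (positive probability of seeing $\Sigma$, via Thom isotopy applied to a transversal $f_\Sigma$, whose existence uses the trivial normal bundle — that part you do have). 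Without an argument that your target profile lies in the support of the limit field, the uniform lower bound $c_\Sigma$ does not follow; filling this in (plus correcting the limit model, and quoting a convergence-in-law/probability statement such as the one the paper takes from \cite{LerarioStecconi} to transfer the positive limiting probability back to the polynomials) would essentially reproduce the paper's proof.
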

We postpone the proof of this proposition to the next section. We now show how this implies Theorem \ref{thm expected}.
\begin{proof}[Proof of Theorem \ref{thm expected}]
Let $R$ be the positive constant given by Proposition \ref{barrier}. For any $m\in\mathbf{N}$, we fix a set of points $\{x_i\}_{i\in I_m}$ of $X(\R)$ with the property that the balls $B(x_i,\frac{R}{m})$ are all disjoint and the balls $B\left(x_i,\frac{2R}{m}\right)$ cover the whole $X(\R)$. In particular, we get $$\Vol\left(X(\R)\right)\leq \sum_{i\in I_m}\Vol\left(B\left(x_i,\frac{2R}{m}\right)\right)\leq \abs{I_m}\Vol\left(B_{\R^n}\left(0,\frac{3R}{m}\right)\right)$$ for $m$ large enough. This implies that for $m$ large enough we have
\begin{equation}\label{estimates on volumes}
\abs{I_m}\geq \frac{\Vol\left(X(\R)\right)}{\Vol\big(B_{\R^n}\left(0,\frac{3R}{m}\right)\big)}=c'\Vol\left(X(\R)\right)m^n.
\end{equation}
We can now estimate the expected value of $\mathcal{N}_{\Sigma}(Z_1(\R)\cap\dots\cap Z_r(\R)) $, for $Z\in\abs{mD}$. We have the lower bound
\begin{multline}
\E\left(\mathcal{N}_{\Sigma}(Z_1(\R)\cap\dots\cap Z_r(\R)) \right)\geq \\
 \sum_{i\in I_m}\mathrm{Prob}_m\bigg\{(Z_1,\dots,Z_r)\in\abs{mD}^r,\hspace{1mm} \mathcal{N}_{\Sigma}\left(Z_1(\R)\cap\dots\cap Z_r(\R)\cap B\big(x_i,\frac{R}{m}\big)\right)\geq 1\bigg\}.
\end{multline}
By Proposition \ref{barrier}, the latter is bigger or equal than $c_\Sigma\abs{I_m}$ which, by \eqref{estimates on volumes}, is bigger or equal than $c'\Vol\left(X(\R)\right)m^n=:cm^n$. Hence the result.
\end{proof}
\section{Proof of Proposition \ref{barrier}}\label{section proof prop}
In this section we prove Proposition \ref{barrier}. Following \cite{gw2},
the idea is to find  one complete intersection $Z_1\cap\cdots\cap Z_r$ of degree $m\gg 1$ such that $$\mathcal{N}_{\Sigma}\left(Z_1(\R)\cap\dots\cap Z_r(\R)\cap B\big(x,\frac{R}{m}\big)\right)\geq 1$$ and then prove that this complete intersection can  perturbed  in a precise quantitative way so that this property happens with positive probability. 
Actually, still keeping in mind this idea,  we will  use a slightly different, more flexible, point of view,  developped recently in \cite{LerarioStecconi}. 

The formalism developped in \cite{LerarioStecconi} works better when one considers functions rather than hypersurfaces or sections of line bundles.   We then  consider $S^N$  the double covering  of $\P^N(\R)$ and $M\subset S^N$ the double covering of $X(\R)$. We fix on $S^N$ the standard round metric $g_{S^N}$ (so that the Fubini-Study metric $g_{FS}$ is the quotient of $g_{S^N}$ by the antipodal map).
For any  $r$-tuple of real  polynomials $\underline{P}=(P_1,\dots,P_r)\in \R^{hom}_m[X_0,\dots,X_N]^r$ we denote by $V_{\underline{P}}$ the complete intersection in $\P^N$ defined by $\{P_1=0\}\cap\dots\cap \{P_r=0\}$ and by $Z_{\underline{P}}$
 the complete intersection in $X$ defined by $V_{\underline{P}}\cap X$.
Fix any point $x\in M$. To prove Proposition \ref{barrier}, it is sufficient to show that 
\begin{equation}\label{new estimate}
\mu_m\left\{\underline{P}\in \R^{hom}_m[X_0,\dots,X_N]^r, \mathcal{N}_{\Sigma}\left(Z_{\underline{P}}(\R)\cap B\big(x,\frac{R}{m}\big)\right)\geq 1\right\}\geq c_\Sigma.
\end{equation}
where the probability measure $\mu_m$ is defined in \eqref{gaussian measure}.

Remark that the scalar product \eqref{L2 scalar product} can be also defined by 
$$\langle P_1,P_2\rangle_{L^2}=\int_{S^N}P_1P_2d\mathrm{vol}_{S^N}.$$
A random   polynomial $P\in \R^{hom}_m[X_0,\dots,X_N]$ with respect to the Gaussian measure $\mu_m$ can be then written as 
$$P=\sum_{i=1}^{d_N}a_iP_i$$
where $d_N=\dim \R^{hom}_m[X_0,\dots,X_N]$, the family $\{P_i\}_{i}$ is an orthonormal basis of $\R^{hom}_m[X_0,\dots,X_N]$ and $(a_i)_i$ are independent standard Gaussian variables. A random $\underline{P}\in \R^{hom}_m[X_0,\dots,X_N]^r$ is then a $r$-tuple $\underline{P}=(P_1,\dots,P_r)$  of independent random polynomials $P_i$.

The covariance function $\mathcal{K}_m(x,y):=\E(P(x)P(y))$ of the random polynomial $P$ is known to have a universal limit at scale $m^{-1}$ (see \cite[Theorem 4.4]{hormander}). This means that for any fixed $R>0$, any point $x$ and  any $u,v\in B_{\R^N}(0,R)\subset \R^N\simeq T_xS^N$ (where $T_xS^N\simeq \R^N$ is given by any isometry) one has that  $$K_{m,x}(u,v):=m^{-n}\mathcal{K}_m\left(\exp_x\left(\frac{u}{m}\right),\exp_x\left(\frac{v}{m}\right)\right)$$ converges in the $\mathscr{C}^{\infty}$-topology to a function $K:B_{\R^N}(0,R)\times B_{\R^N}(0,R)\rightarrow\R$ which is independent of $x$. 
The function $K$ is explicit and given by 
$$K(u,v)=\int_{B_{\R^N}(0,1)}e^{i\langle u-v,\xi\rangle}d\xi.$$ 

By restriction, we then have that the covariance function of the random polynomial $P$ \textit{ restricted to $M$} also has a universal limit at scale $m^{-1}$.  This universal limit is nothing but the restriction of $K$ to $B_{\R^n}(0,R)\times B_{\R^n}(0,R)$, where we see $T_xM\subset T_xS^N$ as the subspace $\R^n\subset \R^N$ given by the last $N-n$ coordinates equal to $0$.

The restriction of the function $K$ to $B_{\R^n}(0,R)\times B_{\R^n}(0,R)$ is the covariance function of a Gaussian field $F:B_{\R^n}(0,R)\rightarrow\R$ (such Gaussian field is precisely characterized by the property $K(u,v)=\mathbf{E}(F(u)F(v))$).

Consider $r$ independent copies $F_1,\dots, F_r$ of the Gaussian field $F$, and denote by $\underline{F}=(F_1,\dots,F_r)$, which is a Gaussian field taking values in $\mathscr{C}^{\infty}\left(B_{\R^n}(0,R),\R^r\right)$.
We now need three lemmas about the geometry of this Gaussian field $\underline{F}$.

\begin{lemma}\label{lemma converg} The restriction to $B_{\R^n}(0,R)$ of the rescaled $r$-tuple of random polynomials $$\underline{P_m}(\cdot):=\left(m^{-n/2}P_1\left(\exp_x\left(\frac{\cdot}{m}\right)\right),\dots,m^{-n/2}P_r\left(\exp\left(\frac{\cdot}{m}\right)\right)\right)$$  converges in probability to $\underline{F}$  as $m\rightarrow\infty$.
\end{lemma}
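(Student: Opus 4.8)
The plan is to recognise $\underline{P_m}$, restricted to $B_{\R^n}(0,R)$, as a smooth centered Gaussian field whose (matrix) covariance is, up to the rescaling, the restriction of $\mathcal{K}_m$, and then to appeal to the general principle that $\mathscr{C}^\infty$-convergence of the covariance kernels of a sequence of Gaussian fields forces convergence of the fields themselves.

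First I would check that, for each $m$, the restriction of $\underline{P_m}$ to $B_{\R^n}(0,R)$ is a centered Gaussian field with values in $\mathscr{C}^\infty\big(B_{\R^n}(0,R),\R^r\big)$ whose $r$ components are independent: indeed $P\mapsto m^{-n/2}P\big(\exp_x(\cdot/m)\big)$ is a linear map, so it sends each Gaussian polynomial $P_i$ to a Gaussian field and preserves the independence of $P_1,\dots,P_r$, while smoothness is automatic. The covariance of each of the $r$ components is the restriction of $K_{m,x}$ to $B_{\R^n}(0,R)\times B_{\R^n}(0,R)$, where we view $\R^n\subset T_xM\subset T_xS^N\simeq\R^N$ as above. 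By the universality of the covariance kernel at scale $m^{-1}$ recalled above from \cite[Theorem 4.4]{hormander}, this converges in the $\mathscr{C}^\infty$-topology to the restriction of $K$, which by construction is the covariance of each (independent) component of $\underline{F}$.

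It then remains to deduce convergence of the fields from convergence of their covariances, which is classical. On the one hand, the uniform-in-$m$ bounds on the $\mathscr{C}^\ell$-norms (for every $\ell$) of the covariance kernels yield, through the classical Kolmogorov--Chentsov / Garsia--Rodemich--Rumsey regularity estimates for Gaussian processes, uniform-in-$m$ bounds on $\E\,\big\|\underline{P_m}\big\|_{\mathscr{C}^{k}(B_{\R^n}(0,R),\R^r)}$ for every $k$, hence tightness of the laws of $\underline{P_m}$ in each space $\mathscr{C}^{k}\big(B_{\R^n}(0,R),\R^r\big)$. On the other hand, the finite-dimensional distributions of $\underline{P_m}$ together with all of its derivatives are Gaussian vectors whose covariance matrices are built from $K_{m,x}$ and its derivatives, and hence converge to the corresponding Gaussian vectors for $\underline{F}$. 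Tightness together with convergence of finite-dimensional distributions gives weak convergence of $\underline{P_m}$ to $\underline{F}$ in $\mathscr{C}^{k}\big(B_{\R^n}(0,R),\R^r\big)$ for every $k$, i.e. in the $\mathscr{C}^\infty$ topology. Finally, since this space is Polish, Skorokhod's representation theorem produces a common probability space carrying copies of the $\underline{P_m}$ and of $\underline{F}$ with almost sure convergence, which is in particular convergence in probability; alternatively one exhibits an explicit coupling by aligning Karhunen--Lo\`eve expansions, as is done in \cite{LerarioStecconi}.

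The only genuinely technical step is the tightness estimate above, i.e. the uniform control of the modulus of continuity of the derivatives of $\underline{P_m}$, and this is exactly where $\mathscr{C}^\infty$-convergence of the covariance (rather than mere pointwise convergence) enters; but it is entirely routine once \cite[Theorem 4.4]{hormander} is granted. In short, the lemma is ``soft'': the substantial analytic input, namely the off-diagonal asymptotics of the Bergman-type kernel, has already been isolated in the statement recalled before the lemma.
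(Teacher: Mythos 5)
Your proposal is correct and follows essentially the same route as the paper: identify the covariance of the rescaled $r$-tuple and invoke its $\mathscr{C}^\infty$-convergence to the covariance of $\underline{F}$, which is exactly the universality statement recalled before the lemma. The only difference is that where the paper simply cites \cite[Theorem 5]{LerarioStecconi} to pass from convergence of covariances to convergence of the Gaussian fields, you sketch the standard proof of that implication (tightness via Gaussian regularity estimates, convergence of finite-dimensional distributions, then Skorokhod/coupling), which is a legitimate unpacking of the same black box.
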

\begin{proof}
By construction, we have that the covariance function of the rescaled random polynomial $m^{-n/2}P\left(\exp_x\left(\frac{\cdot}{m}\right)\right)$ converges (in the $\mathscr{C}^{\infty}$-topology) to the covariance function of $F$. This implies that the covariance function of $\underline{P_m}$ converges to the covariance function of $\underline{F}$. By \cite[Theorem 5]{LerarioStecconi}, this implies the result.
\end{proof}
\begin{lemma}\label{support} For any smooth function $f\in\mathscr{C}^{\infty}\left(\bar{B}_{\R^n}(0,R),\R^r\right)$ and any neighborhood $U$ of $f$ in $\mathscr{C}^{\infty}\left(\bar{B}_{\R^n}(0,R),\R^r\right)$, the probability that $\underline{F}$ takes value in $U$ is strictly positive. That is,  the support of $\underline{F}$ contains the space $\mathscr{C}^{\infty}\left(\bar{B}_{\R^n}(0,R),\R^r\right)$. Here, $\bar{B}_{\R^n}(0,R)$ denotes the closed ball of radius $R$ around $0$.
\end{lemma}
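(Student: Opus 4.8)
The plan is to prove this support statement by showing that the Gaussian field $\underline{F}$ is \emph{non-degenerate} in a strong enough sense, and then invoking the standard fact that the topological support of a Gaussian measure on a separable Banach (or Fr\'echet) space is the closure of the Cameron--Martin space. Concretely, it suffices to treat one copy $F$ of the field, since $\underline{F}=(F_1,\dots,F_r)$ with the $F_i$ independent, and a product of full-support measures has full support on the product space. So I would reduce to showing: the support of the $\R$-valued Gaussian field $F$ on $\bar B_{\R^n}(0,R)$, with covariance $K(u,v)=\int_{B_{\R^n}(0,1)}e^{i\langle u-v,\xi\rangle}d\xi$, is all of $\mathscr{C}^{\infty}(\bar B_{\R^n}(0,R),\R)$.

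The key structural observation is that $F$ is (the restriction of) the \emph{Paley--Wiener field}: its spectral measure is Lebesgue measure on the unit ball $B_{\R^n}(0,1)$. Hence a sample of $F$ is the restriction to $\bar B_{\R^n}(0,R)$ of the real part of a random band-limited function, and the Cameron--Martin space $\mathcal{H}$ of $F$ consists exactly of restrictions of functions whose Fourier transform is an $L^2$ function supported in $\bar B_{\R^n}(0,1)$ — in particular $\mathcal{H}$ contains the restriction of every Schwartz function with compactly supported Fourier transform inside the open unit ball, and these are real-analytic. The first real step is therefore to identify $\mathcal{H}$ and record that it is dense in $\mathscr{C}^{\infty}(\bar B_{\R^n}(0,R),\R)$: given $f\in\mathscr{C}^{\infty}(\bar B_{\R^n}(0,R),\R)$, extend it to a compactly supported smooth function $\tilde f$ on $\R^n$ (Seeley/Whitney extension), then mollify its Fourier transform $\widehat{\tilde f}$ against an approximate identity supported in a small ball around the origin and truncate to the unit ball; letting the mollification parameter shrink produces band-limited real-analytic functions converging to $\tilde f$ in every $\mathscr{C}^k$ norm on the compact set $\bar B_{\R^n}(0,R)$, hence in the $\mathscr{C}^{\infty}$ topology. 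This shows $\overline{\mathcal{H}}=\mathscr{C}^{\infty}(\bar B_{\R^n}(0,R),\R)$.

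The second step is to legitimately apply the Gaussian-support theorem. For this I need $F$, viewed as a random element, to live in the separable Fr\'echet space $E:=\mathscr{C}^{\infty}(\bar B_{\R^n}(0,R),\R)$ and to induce a genuine Gaussian Radon measure there; this is exactly the kind of regularity statement underlying Lemma~\ref{lemma converg}, and it follows because the covariance $K$ is smooth (indeed real-analytic) so Kolmogorov-type continuity estimates give almost-sure $\mathscr{C}^{\infty}$ samples. The support theorem (e.g.\ as in Bogachev's \emph{Gaussian Measures}, or the formulation in \cite{LerarioStecconi}) then says $\operatorname{supp}(F)=\overline{\mathcal{H}}$ in $E$, which by Step~1 is all of $E$. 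Taking $r$-fold products finishes the proof: $\operatorname{supp}(\underline{F})=\operatorname{supp}(F)^r=\mathscr{C}^{\infty}(\bar B_{\R^n}(0,R),\R^r)$, so for any $f$ and any neighborhood $U$ of $f$ we have $\mathrm{Prob}(\underline{F}\in U)>0$.

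The main obstacle I anticipate is Step~1 — more precisely, being careful that the band-limited approximations really converge in the $\mathscr{C}^{\infty}$ topology on the \emph{closed} ball and that the truncation of the Fourier transform to the unit ball (rather than a slightly smaller ball, where convolution would keep us safely inside) does not spoil membership in the Cameron--Martin space; the clean fix is to approximate using functions band-limited to a ball of radius $1-\varepsilon$ first and then let $\varepsilon\to 0$, which keeps every intermediate function manifestly in $\mathcal{H}$. A secondary technical point is justifying that $F$ is concentrated on $\mathscr{C}^{\infty}$ rather than merely on some finite $\mathscr{C}^k$, but this is immediate from the smoothness of $K$ and is in any case implicitly available from the convergence statement in Lemma~\ref{lemma converg}.
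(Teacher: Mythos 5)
Your overall strategy---reduce to one component $F$, invoke the Gaussian support theorem identifying $\mathrm{supp}(F)$ with the closure of the Cameron--Martin space, and then prove that this space is dense in $\mathscr{C}^{\infty}\big(\bar{B}_{\R^n}(0,R),\R\big)$---is sound and parallels the paper, which cites \cite[Theorem 6]{LerarioStecconi} to identify the support with the closure of $\mathcal{H}_F=\mathrm{Span}\{K_v\}$ and then devotes the entire proof to the density statement. The genuine gap is exactly at your Step 1, the density argument. You extend $f$ to a compactly supported $\tilde f$, mollify $\widehat{\tilde f}$, truncate to the \emph{fixed} unit ball, and claim the resulting band-limited functions converge to $\tilde f$ in every $\mathscr{C}^k$ norm on $\bar{B}_{\R^n}(0,R)$. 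This is false: the mollification only multiplies $\tilde f$ by a factor tending to $1$ (that part is fine, and is also used in the paper), but the truncation discards all spectral content of $\tilde f$ outside $B(0,1)$, so the error is essentially the inverse Fourier transform of $\widehat{\tilde f}\,\mathbf{1}_{\{\abs{\xi}>1\}}$, which does not tend to zero as the mollification parameter shrinks --- a nonzero compactly supported $\tilde f$ always has nonnegligible spectrum outside any fixed ball. Your proposed fix (band-limit to radius $1-\varepsilon$ and let $\varepsilon\to 0$) does not address this: the obstruction is the fixed band, not the boundary of the ball; membership of the approximants in the Cameron--Martin space was never the problematic point.

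What is true, and what the paper actually proves, is that restrictions to the compact ball of functions band-limited to $B(0,1)$ are dense in $\mathscr{C}^{\infty}\big(\bar{B}_{\R^n}(0,R),\R\big)$ for a different reason: on a fixed compact set one can generate every monomial $x_1^{k_1}\cdots x_n^{k_n}$ in the closure of $\mathcal{H}_F$ by differentiating exponentials $e^{i\langle x,\xi\rangle}$ in the frequency variable at frequencies inside the ball. The paper implements this by taking $\xi$-derivatives of a bump $\varphi_t$ concentrating at $0$, whose inverse Fourier transform is $(-1)^k x_1^{k_1}\cdots x_n^{k_n}$ times a factor converging to $1$, and then approximating by Riemann sums of the kernels $K_v$; density of polynomials in $\mathscr{C}^{\infty}$ of the closed ball concludes. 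Alternatively, a duality argument would do: a distribution supported in $\bar{B}_{\R^n}(0,R)$ whose Fourier transform (an entire function) vanishes on the unit ball vanishes identically. Either of these must replace your Step 1; as written, the key analytic step of the lemma is missing. A minor further inaccuracy: the spectral measure of the relevant field is the projection to $\R^n$ of Lebesgue measure on $B_{\R^N}(0,1)$ (the field is the restriction of the ambient one), not Lebesgue measure on $B_{\R^n}(0,1)$; this does not affect the structure of the argument, since the projected measure still has full support and positive density in the unit ball of $\R^n$.
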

\begin{proof}
It is enough to prove that the support of each components of $\underline{F}$  includes $\mathscr{C}^{\infty}\left(\bar{B}_{\R^n}(0,R),\R\right)$. By the density of polynomials inside  $\mathscr{C}^{\infty}\left(\bar{B}_{\R^n}(0,R),\R\right)$, it is actually enough to prove that any monomial $x_1^{k_1}\cdots x_n^{k_n}$ is in the support  of each components of $\underline{F}$.

By definition, a component of $\underline{F}$ is a copy of the Gaussian field $F$ whose covariance function is $$K(u,v)=\int_{B_{\R^N}(0,1)}e^{i\langle u-v,\xi\rangle}d\xi$$ 
and whose spectral measure $\sigma$ is the Lebesgue measure on $B_{\R^N}(0,1)$.

The support of $F$ is known to be equal to the closure (in the $\mathscr{C}^{\infty}(\bar{B}(0,R),\R)$ topology) of the following space of function
$$\mathcal{H}_F=\mathrm{Span}\{K_v, v\in\R^n\}$$
where $K_v(x):=\int_{B_{\R^N}(0,1)}e^{i\langle x-v,\xi\rangle}d\xi$, see \cite[Theorem 6]{LerarioStecconi}.
It is then enough to prove that any monomial $x_1^{k_1}\cdots x_n^{k_n}$ can be approximated (uniformly together with its derivatives) by elements of the space $\mathcal{H}_F$.

In order to prove this, let $\varphi:\R^N\rightarrow [0,1]\subset\R$ be an even smooth function, which equals $0$ outside $B_{\R^N}(0,1)$ and such that $\int_{\R^N}\varphi(\xi)\xi=1$.
Let us consider the function $\varphi_t(\xi)=\frac{1}{t^N}\varphi(\frac{\xi}{t})$, for any $0<t\leq 1$.
  Given $k_1,\dots,k_n\in\mathbf{N}$, with $\sum_{i=1}^nk_i=k$, consider the function $$\frac{\partial^k}{\partial \xi_1^{k_1}\cdots\partial \xi_n^{k_n}}\varphi_t.$$
 By construction, the value at $x\in B_{\R^n}(0,R)$ of its inverse Fourier transform equals
 $$\int_{\R^N}\frac{\partial^k}{\partial \xi_1^{k_1}\cdots\partial \xi_n^{k_n}}\varphi_t(\xi)e^{i\langle \xi,x\rangle}d\xi=(-1)^kx_1^{k_1}\cdots x_n^{k_n}\int_{\R^N}\varphi_t(\xi)e^{i\langle \xi,x\rangle}d\xi$$
where $\int_{\R^N}\varphi_t(\xi)e^{-i\langle \xi,x\rangle}d\xi\xrightarrow{t\rightarrow 0} 1$ in $\mathscr{C}^\infty(\bar{B}_{\R^n}(0,R),\R)$.
Remark also that the support of $\varphi_t$ is included in $\bar{B}_{\R^N}(0,1)$ for any $t\leq 1$. Denote by $f_t$ the inverse Fourier transform of $(-1)^k\frac{\partial^k}{\partial \xi_1^{k_1}\cdots\partial \xi_n^{k_n}}\varphi_t.$ We then have that the function 
$$g_t(x):=\int_{v\in\R^N}\int_{\xi\in\bar{B}_{\R^N}(0,1)}f_t(v)e^{i\langle x-v,\xi\rangle}dvd\xi$$
converges to $x_1^{k_1}\cdots x_n^{k_n}$ in the $\mathscr{C}^\infty(\bar{B}_{\R^n}(0,R),\R)$ topology  as $t\rightarrow 0$. On the other hand, the functions $g_t$ can be approximated in the  $\mathscr{C}^\infty(\bar{B}_{\R^n}(0,R),\R)$ topology by elements of $\mathcal{H}_F$, by taking Riemann sums.
  This implies that the monomial $x_1^{k_1}\cdots x_n^{k_n}$ can be approximated (uniformly together with its derivatives) by elements of $\mathcal{H}_F$. Hence the result.
 \end{proof}

\begin{lemma}\label{stability}
The probability that the zero  locus of $\underline{F}$ is diffeomorphic to $\Sigma$ is bigger than some strictly positive number $c_\Sigma$. 
\end{lemma}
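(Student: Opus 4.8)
The plan is to derive the lemma from the full-support statement of Lemma~\ref{support} combined with a stability (openness) argument in the spirit of Thom's isotopy lemma. Precisely, I will first produce a single smooth map $f\in\mathscr{C}^{\infty}\big(\bar B_{\R^n}(0,R),\R^r\big)$ for which $0$ is a regular value, whose zero set $f^{-1}(0)$ is contained in $B_{\R^n}(0,R/2)$ and is diffeomorphic to $\Sigma$; then I will show that this persists for every $g$ in a suitable $\mathscr{C}^1$-neighbourhood $U$ of $f$ inside $\mathscr{C}^{\infty}\big(\bar B_{\R^n}(0,R),\R^r\big)$. Such a $U$ is open for the $\mathscr{C}^{\infty}$-topology, and by Lemma~\ref{support} the support of $\underline F$ contains $\mathscr{C}^{\infty}\big(\bar B_{\R^n}(0,R),\R^r\big)$; hence the probability that $\underline F\in U$ is positive, and calling this probability $c_\Sigma$, every realisation of $\underline F$ lying in $U$ has zero locus diffeomorphic to $\Sigma$. (Since $\underline F$ is a priori defined on the open ball one should work on $\bar B_{\R^n}(0,R')$ for a slightly smaller $R'$; this is harmless.)

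Building the model $f$ is the step that uses the hypothesis on the normal bundle of $\Sigma$, and I expect it to be the main point. After a rescaling (a diffeomorphism of $\R^n$) we may assume $\Sigma\subset B_{\R^n}(0,R/4)$. Triviality of the normal bundle and the tubular neighbourhood theorem furnish a closed tubular neighbourhood $T\subset B_{\R^n}(0,R/2)$ of $\Sigma$ and a diffeomorphism $\Psi\colon\Sigma\times\bar B_{\R^r}(0,1)\to T$ equal to the identity on $\Sigma$; composing $\Psi^{-1}$ with the projection onto the second factor gives a submersion $q\colon T\to\bar B_{\R^r}(0,1)$ with $q^{-1}(0)=\Sigma$ and $|q|\equiv 1$ on $\partial T$. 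It then suffices to extend $q$ to a smooth $f\colon\bar B_{\R^n}(0,R)\to\R^r$ that vanishes nowhere on $\bar B_{\R^n}(0,R)\setminus T$, equivalently to extend the bundle projection $\partial T\cong\Sigma\times S^{r-1}\to S^{r-1}$ over the compact manifold $\bar B_{\R^n}(0,R)\setminus\mathrm{int}\,T$ to an $S^{r-1}$-valued map. For $r=1$ this is elementary: $\Sigma$ bounds a compact domain in $B_{\R^n}(0,R)$ and one takes for $f$ a smoothing of the corresponding signed distance function. For general $r$ the existence of such an extension is where the triviality of $\nu_\Sigma$ is used essentially (it amounts to a Pontryagin--Thom collapse construction): the successive obstructions lie in $H^{k+1}\big(\bar B_{\R^n}(0,R)\setminus\mathrm{int}\,T,\ \partial T;\ \pi_{k}(S^{r-1})\big)$ for $k\ge r-1$, and they are handled, the primary one (for $k=r-1$) being killed by an appropriate choice of trivialisation of $\nu_\Sigma$. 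In all cases one gets $f$ with $0$ a regular value and $f^{-1}(0)=\Sigma\subset B_{\R^n}(0,R/2)$.

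Finally I check stability, which is routine. Fix $\delta,\varepsilon>0$ so that the closed $\delta$-neighbourhood $\bar V$ of $\Sigma$ satisfies $\bar V\subset B_{\R^n}(0,R/2)$, so that $|f|\ge\varepsilon$ on $\bar B_{\R^n}(0,R)\setminus V$, and so that $Df$ is surjective at every point of $\bar V$ with uniformly bounded right inverse (possible since $Df$ is surjective along $\Sigma$). If $\|g-f\|_{\mathscr{C}^1(\bar B_{\R^n}(0,R))}<\eta$ with $\eta$ small, then $g$ vanishes nowhere on $\bar B_{\R^n}(0,R)\setminus V$ and is transverse to $0$ on $\bar V$, so $g^{-1}(0)$ is a compact submanifold contained in $B_{\R^n}(0,R/2)$. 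Moreover the affine homotopy $H(x,t)=(1-t)f(x)+tg(x)$ is transverse to $0$ on $\bar V\times[0,1]$ and vanishes nowhere on $\big(\bar B_{\R^n}(0,R)\setminus V\big)\times[0,1]$; hence $H^{-1}(0)$ is a compact submanifold with boundary of $B_{\R^n}(0,R)\times[0,1]$ whose projection to $[0,1]$ is a proper submersion, so by Ehresmann's theorem it is a smooth fibre bundle and its fibres over $0$ and $1$, namely $\Sigma=f^{-1}(0)$ and $g^{-1}(0)$, are diffeomorphic. Taking $U=\{g:\|g-f\|_{\mathscr{C}^1(\bar B_{\R^n}(0,R))}<\eta\}$ and invoking the first paragraph completes the argument.
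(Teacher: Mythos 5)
Your overall strategy is the same as the paper's: exhibit one model map $f$ vanishing transversally with zero set diffeomorphic to $\Sigma$, observe that this property is open in $\mathscr{C}^{\infty}\big(\bar{B}_{\R^n}(0,R),\R^r\big)$ (the paper quotes Thom's isotopy lemma; you reprove it by hand via a $\mathscr{C}^1$-estimate and Ehresmann, which is fine), and conclude by Lemma \ref{support} that this open set has positive probability for $\underline{F}$. Your second and third paragraphs are correct and are just an expanded version of the corresponding two sentences of the paper's proof.

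The issue is in your first paragraph, for $r\geq 2$, i.e.\ exactly at the step you yourself single out as the main point. You insist that the tubular-neighbourhood projection $q$ be extended to a map with no zeros on $\bar{B}_{\R^n}(0,R)\setminus T$, which is the problem of extending an $S^{r-1}$-valued map from $\partial T\cong\Sigma\times S^{r-1}$ over the complement of the tube, and you dismiss it with ``the successive obstructions \dots are handled''. That is an assertion, not an argument: only the primary obstruction is addressed (and even there the claim that a suitable trivialisation of $\nu_\Sigma$ kills it is not proved), while the higher obstructions take values in the groups $\pi_k(S^{r-1})$, $k\geq r$, which are in general nonzero, and nothing is said about why they vanish or can be made to vanish. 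As written, your construction of $f$ is complete only for $r=1$; the paper, by contrast, does not enter this extension problem at all and simply invokes the existence of such an $f_\Sigma$ as what the trivial-normal-bundle hypothesis provides. Note also that you could sidestep the extension problem entirely: extend $q$ from a closed tube to $\bar{B}_{\R^n}(0,R)$ arbitrarily and perturb the extension away from a neighbourhood of $\Sigma$ so that $0$ becomes a regular value of the global map; then $\Sigma$ is a union of connected components of $f^{-1}(0)$, your $\mathscr{C}^1$-stability argument, localised near $\Sigma$, shows that every nearby $g$ still has a union of components diffeomorphic to $\Sigma$ (the possible extra components staying at positive distance), and this weaker conclusion is all that is needed when the lemma is fed into the proof of Proposition \ref{barrier}, since a compact component of the zero locus inside the ball is automatically a component of the whole zero locus. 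As it stands, though, your proof of the lemma for general $r$ has a genuine gap at the obstruction-theoretic step.
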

\begin{proof}
Let  $f_{\Sigma}\in \mathscr{C}^{\infty}\left(\bar{B}_{\R^n}(0,R),\R^r\right)$  be a function vanishing transversally and such that $\{f_\Sigma=0\}$ is diffeomorphic to $\Sigma$. Remark that such function exists exactly by the hypothesis that the normal bundle of $\Sigma$ is trivial. 
By Thom isotopy lemma,  the zero set of any other function $f$ lying in a small neighborhood $U_\Sigma$ of $f_\Sigma$ is still diffeomorphic to $\Sigma$. 
By Lemma \ref{support}, $f_{\Sigma}$ is in the support of $\underline{F}$, and then the probability that $\underline{F}$ takes value in $U_\Sigma$ is strictly positive, which proves the lemma.
\end{proof}
We are now able to finish the proof of Proposition \ref{barrier}. 
By Lemmas \ref{lemma converg} and \ref{stability} and by {\cite[Theorem 5]{LerarioStecconi},} we  obtain that, for $m$ large enough, the probability that $Z_{\underline{P}}(\R)\cap B(x,\frac{R}{m})$, for $\underline{P}\in\R_m^{hom}[X_0,\dots,X_N]^r$, has a connected component diffeomorphic to $\Sigma$ is bigger than $c_\Sigma$. This is exactly \eqref{new estimate}, which finishes the proof of Proposition \ref{barrier}. \qed
\section{Further comments and related results}\label{section questions}

\subsection{Some questions} Let $D$ be a divisor on a real algebraic variety of dimension $n$. For any $i\in\{0,\dots,n-1\}$ we define
$$v_i(D)=\displaystyle\limsup_{m\rightarrow\infty}\frac{1}{m^n}\sup_{Z\in\abs{mD}}b_i(Z(\R))\hspace{4mm}\mathrm{and}\hspace{4mm}v(D)=\displaystyle\limsup_{m\rightarrow\infty}\frac{1}{m^n}\sup_{Z\in\abs{mD}}b(Z(\R)).$$
One always has  the trivial inequality $v_i(D)\leq v(D)$. The present paper implies that $v_i(D)>0$ if $D$ is ample. In this case, the existence of asymptotically maximal hypersurfaces in $\abs{mD}, m\gg 0$ is equivalent to the equality $v(D)=D^n$, where $D^n$  denotes the top self-intersection number of $D$. 
What about the other numbers $v_i(D)$? Are there examples of $(X,D)$ for which $v_i(D)<v(D)$, for some $i$? Does there exist relations between the numbers $v_i(D)$, apart from the trivial equality $v_i(D)=v_{n-i-1}(D)$? 
In real algebraic geometry, these numbers may refine the concept of volume of divisors. Recall that, if $D$ is a divisor of a complex manifold $X$ then the volume of $D$ is defined by the formula
$$\Vol(D)=\displaystyle\limsup_{m\rightarrow\infty}\frac{n!}{m^n}\dim H^0(X,\mathcal{O}(mD)).$$
The volume of a divisor encodes some informations about it. For example if $D$ is ample, then $\Vol(D)=D^n$. A divisor $D$ is big precisely when $\Vol(D)>0$. If now we consider $X$ and $D$ to be defined over $\R$, can one caracterize the bigness of $D$ in terms of $v_i(D)$? If $D$ is a big, are the numbers $v_i(D)$ positive for any $i\in\{0,\dots,n-1\}$?

\subsection{Some comments on the probability measures on $\abs{mD}$}
Let $D$ be an ample divisor over a real algebraic variety $X$ and denote by $H^0(X,\mathcal{O}(mD))$ the space over global algebraic sections of $\mathcal{O}(mD)$ defined over $\R$.
From the algebro-geometric point of view, the probability measures on $H^0(X,\mathcal{O}(mD))$ that seem more natural are the so-called complex Fubini-Study measures. These  measures are the Gaussian measure on $H^0(X,\mathcal{O}(mD))$ induced by the scalar product defined by 
\begin{equation}\label{complex scalar product}
\langle s_1,s_2\rangle=\int_{X(\C)}h^{\otimes m}(s_1,s_2)\frac{\omega^{\wedge n}}{n!}
\end{equation}
where $h$ is a hermitian metric on $\mathcal{O}(D)$ with positive curvature $\omega$.
It should be noted that  the Kodaira embeddings $\Phi_m:X(\C)\rightarrow \P^{d_m}(\C)$ induced by the Hermitian products \eqref{complex scalar product} are asymptotically isometries \cite{tian,bouche}, in the sense that $\frac{1}{m}\Phi^*_m\omega_{FS}\rightarrow\omega$, as $m\rightarrow\infty$.
With respect to the probability measure induced by the Hermitian product \eqref{complex scalar product},  asymptotically maximal hypersurfaces  are exponentially rare \cite{anc5,diatta,gwexp}. This reflects the fact that it is very difficult in practice to construct such hypersurfaces: if one randomly writes down an equation, it is unlikely to define a (almost) maximal hypersurface.
For these probability measures, the expected value $\E\left(b_i\left(Z(\R)\right)\right)$ of each Betti number of a random real hypersurface $Z\in\abs{mD}$ is of order $m^{n/2}$ (see \cite{gw3,gw2}) and any closed hypersurface $\Sigma\subset\R^n$ appears with positive probability in any ball of radius $\sim m^{-1/2}$ (see \cite{gw2}).

In contrast, the probability measure used in this paper seems less natural from an algebraic geometry point of view and produces, because of this, more extreme effects: the average of the Betti numbers of a random hypersurface is of maximal order (for $X=\P^n$ and for the number of connected components, this was proved in \cite{ll,ns}) and this implies precisely the existence of  hypersurfaces with rich topologies. Note that for the sphere $S^n$ the existence of real algebraic hypersurfaces with many components diffeomorphic to a given closed hypersurface $\Sigma\subset\R^n$ can also be obtain from \cite{gwuniversalcomponent}, by remarking that linear combinations of the Laplacian eigenfunctions  with eigenvalues smaller than $L$ coincide with homogenous polynomials of degree $\sim \sqrt{L}$.

We recall that for the projective space, and more generally for toric varieties, the existence of asymptotically maximal hypersurfaces is known. In these cases, where more combinatorial tools are accessible, it would be interesting to construct probability measures in which such hypersurfaces appear with high probability.

\bibliographystyle{plain}
\bibliography{biblio}
\end{document}